\setlist[itemize]{noitemsep} 
\renewcommand\thesection{\Roman{section}} 
\renewcommand\thesubsection{\roman{subsection}} 
\titleformat{\section}[block]{\large\scshape\centering}{\thesection.}{1em}{} 
\titleformat{\subsection}[block]{\large}{\thesubsection.}{1em}{} 
\title{On a class of retarded integrodifferential equations}
\author{%
	\textsc{Fouad Maragh }\\
	\normalsize Laboratory LMA, Department of Mathematics,\\
	\normalsize Faculty of sciences,  Ibn Zohr University,  PB 80000 Agadir, Morocco. \\
	\normalsize \href{f.maragh@uiz.ac.ma}{f.maragh@uiz.ac.ma},  
}
\date{}
\newtheorem{theorem}{Theorem}[section]
\newtheorem{definition}[theorem]{Definition}
\newtheorem{example}[theorem]{Example}
\newtheorem*{example*}{Example}
\newtheorem{lemma}[theorem]{Lemma}
\newtheorem{remark}[theorem]{Remark}
\begin{document}
\maketitle \markright{On a class of retarded integrodifferential equations} 

\section{Introduction.}  
  It is commonly acknowledged that equations in Banach spaces with delay present special challenges in terms of well-posedness. Delayed differential equations are a type of dynamical system that is significant for modelling hereditary phenomena in physics, biology, chemistry, economics, ecology, and other fields. Numerous articles have addressed the analysis of these equations using a semigroup method; for instance, J. Hale and G. Webb  \cite{hale1971functional,webb1974autonomous}  were among the pioneers in this area. We also cite 
  \cite{PritchardSalamon85,ClemmentDaprato1988,wu1996theory,rhandi1997extrapolation,kaiser2004integrated,batkai2001JMAA,batkai2005semigroups,HaddIdrissiRhandi2006,Calzadillas2010,hale2012theory} for references from more recent works.

In this paper, we focus on the following retarded integro-differential equation in a Banach space $X$
\begin{equation}
\begin{cases}
\dot{x}\left(t\right)=Ax\left(t\right)+\int_{0}^{t}b\left(t-\tau\right)Lx_{\tau}d\tau+Kx_{t}; & t\geq0,\\
x\left(0\right)=x,\,\,\,x_{0}=\varphi.
\end{cases}\label{RIE}
\end{equation}
Here, $A:D(A)\subset X\longrightarrow X$ generates a $C_{0}$-semigroup
$(\mathbb{T}(t))_{t\ge0}$ on $X$,  $b\left(\cdot\right)$ are scalar kernels in $W^{1,p}(\mathbb{R}_{+},\mathbb{C})$,
where $\mathbb{R}_{+}$ denotes the half line $\left[0,+\infty\right)$
and $1< p<+\infty$. The delay operator $L\in\mathcal{L}\left(\ensuremath{W^{1,p}}([-1,0],X);X\right)$,
the initial condition $\left(x,\varphi\right)\in X\times L^{p}\left([-1,0],X\right)$
and for each $t\ge0$, the history function $x_{t}:[-1,0]\to X$ of
$x\left(\cdot\right)$ is defined by $x_{t}(\theta)=x(t+\theta)$
for $\theta\in[-1,0]$. 

To the best of our knowledge, a semigroup technique has not yet been used to study this class of retarded equations where the delay term is in the convolution product.  The difficulty of this equation lies in the delay term $Lx_{t}$ which is one of the term of this convolution product $\int_{0}^{t}b\left(t-\tau\right)Lx_ {\tau}d\tau$, Knowing in advance that   $x_{t}$ satisfies the following equation
\begin{equation}
\begin{cases}
\frac{\partial}{\partial t}v\left(t,\theta\right)=\frac{\partial}{\partial\theta}v\left(t,\theta\right),\\
v\left(t,0\right)=x\left(t\right),\\
v\left(0,\theta\right)=\varphi\left(\theta\right).
\end{cases};\,\,t\geq0,\,-1\leq\theta\leq0.\label{eq:x_t}
\end{equation}

To overcome this difficulty, we were inspired by the paper \cite{DeschSchappacher1985}, by introducing the function $g\left(t,\cdot\right)\in L^{p}\left(\mathbb{R}_{+},X\right)$ defined by 

\begin{equation}
g\left(t\right)\left(s\right):=g\left(t,s\right)=\int_{0}^{t}b\left(t+s-\tau\right)Lx_{\tau}d\tau,\,\,t,s\geq0.\label{eq:Fct_g}
\end{equation}

This paper is organized as follows : In section 2, we rewrite the equation (\ref{RIE}) in associated product spaces
to an abstract Cauchy problem as follows 
$
\dot{\Upsilon}\left(t\right)=\mathcal{A}\Upsilon\left(t\right),
$
 and it is well known that this  system is well-posed if and only if $\mathcal{A}$ generates a $ C_0 $-semigroup. In our situation, the idea is to write $\mathcal{A}$ as a perturbation of a generator of a $C_{0}$-semigroup by an unbounded operator, then we apply \label{key}   Miyadera-Voigt perturbation   to obtain a sufficient condition  for  $\mathcal{A}$   to   generate  a $C_{0}$-semigroup.  In section 3, we study the spectral properties  of $ \mathcal{A} $,  by calculating the resolvent $R(\lambda ,  \mathcal{A})$ and the resolvent set $\rho( \mathcal{A})$ of the operator $ \mathcal{A } $. In section 4, we give two applications of the retarded integro-differential Volterra equations to illustrate the main results of this paper.

\section{ Well-posedness.}

In this section,  we denote by $X_{1}:=\left(D(A),\|\cdot\|_{1}\right)$
the Banach space equipped with the graph norm $\|x\|_{1}=\|x\|+\|Ax\|$,
and by $\rho(A)$ the resolvent set of $A,$ $\sigma(A)=\mathbb{C}\backslash\rho(A)$
the spectrum of $A$, $R(\lambda,A):=(\lambda I-A)^{-1}$ for $\lambda\in\rho(A)$
the resolvent operator of $A$, $\mathbb{C}_{\omega}=\left\{ \lambda\in\mathbb{C}\,\mid\:\Re\left(\lambda\right)>\omega\right\} $
with $\omega\in\mathbb{R}$, $ \Re\left(\lambda\right) $ the real part of $\lambda $ and let $Y$ be another Banach space,

\begin{definition} {\rm \cite{G.Weiss;1989b}}
	\label{def:MYVOGT} An operator $B\in\mathcal{L}(X_1,Y)$ is called
	an $q$-admissible observation operator for $\left(\mathbb{T}\left(t\right)\right)_{t\ge0}$, with $q\geq1$ if 
	\begin{equation}
	\int_{0}^{\tau}\|B\mathbb{T}(t)x\|^{q}dt\leq\gamma^{q}(\tau)\|x\|^{q}\label{adm_obs}
	\end{equation}
	for all $x\in D\left(A\right)$ and for some constants $\tau>0$ and
	$\gamma(\tau)>0$. \\
	In the special case when $Y=X$, $q=1$ and the
	constant $\gamma:=\gamma(\tau)<1$.
	\begin{equation}
	\int_{0}^{\tau}\|B\mathbb{T}(t)x\|dt\leq\gamma\|x\|,\label{MydrVgt}
	\end{equation}
	the operator $B$ is called Miyadera-Voigt
	perturbation for $\left(\mathbb{T}\left(t\right)\right)_{t\ge0}$.
\end{definition}

\begin{remark} \label{rem:qobservation} By Hölder\textquoteright s inequality it's
	easy to see that an $q$-admissible observation operator for $\left(\mathbb{T}\left(t\right)\right)_{t\ge0}$
	with $q>1$ from $\mathcal{L}(X_1,X)$ is also a Miyadera-Voigt perturbation.
\end{remark}

We will use in the sequel the following perturbation theorem (see.
\cite[Corollary 3.16, page 199]{EngelNagel;2000}).
\begin{theorem}[\cite{miyadera66,Voigt77}]
	\label{thm:MYVoGT} Let $A$ be the generator of $C_{0}$-semigroup
	$\left(\mathbb{T}\left(t\right)\right)_{t\ge0}$ on a Banach space
	$X$ and $B$ is a Miyadera-Voigt perturbation for $\left(\mathbb{T}\left(t\right)\right)_{t\ge0}$
	satisfy (\ref{MydrVgt}), then the sum $A+B$ with domain $D\left(A+B\right)=D\left(A\right),$
	generates a strongly continuous semigroup $\left(\mathscr{T}\left(t\right)\right){}_{t\geq0}$
	on $X$. Moreover $\left(\mathscr{T}\left(t\right)\right){}_{t\geq0}$
	satisfies 	
	$$ 	\mathscr{T}(t)x=\mathbb{T}(t)x+\int_{0}^{t}\mathbb{T}(t-s)B\mathscr{T}(s)xds $$
	and 
	$$ 	\int_{0}^{\tau}\|B\mathscr{T}(t)x\|dt\leq\frac{\gamma}{1-\gamma}\|x\| $$
	for $x\in D(A)$ and $t\geq0$. \\
\end{theorem}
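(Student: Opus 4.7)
The plan is to construct $\left(\mathscr{T}(t)\right)_{t\geq 0}$ via a Dyson--Phillips-type iteration. I would set $\mathscr{T}_{0}(t):=\mathbb{T}(t)$ and, for $x\in D(A)$, define inductively
\[
\mathscr{T}_{n+1}(t)x:=\int_{0}^{t}\mathbb{T}(t-s)B\mathscr{T}_{n}(s)x\,ds,
\]
so that the candidate perturbed semigroup is the series $\mathscr{T}(t)x:=\sum_{n\ge 0}\mathscr{T}_{n}(t)x$. The work then reduces to verifying convergence of this series, the semigroup law, strong continuity, and identification of the generator as $A+B$.

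First I would prove by induction on $n$ the sharpened admissibility estimate
\[
\int_{0}^{\tau}\|B\mathscr{T}_{n}(t)x\|\,dt\le\gamma^{n+1}\|x\|,\qquad x\in D(A).
\]
The base case $n=0$ is precisely hypothesis (\ref{MydrVgt}). For the inductive step I would apply Fubini's theorem to obtain
\[
\int_{0}^{\tau}\|B\mathscr{T}_{n+1}(t)x\|\,dt\le\int_{0}^{\tau}\!\!\int_{0}^{\tau-s}\|B\mathbb{T}(u)\bigl(B\mathscr{T}_{n}(s)x\bigr)\|\,du\,ds,
\]
use the Miyadera--Voigt bound in the inner integral (after density-extending it from $D(A)$ to $X$) and the inductive hypothesis in the outer one. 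Combining this with the pointwise estimate $\|\mathscr{T}_{n+1}(t)x\|\le M\gamma^{n+1}\|x\|$, where $M=\sup_{s\in[0,\tau]}\|\mathbb{T}(s)\|$, the assumption $\gamma<1$ forces absolute and uniform convergence of the Dyson--Phillips series on $[0,\tau]$, so each $\mathscr{T}(t)$ extends by density to a bounded operator on $X$ with $t\mapsto\mathscr{T}(t)x$ strongly continuous.

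Next I would verify the semigroup identity $\mathscr{T}(t+s)=\mathscr{T}(t)\mathscr{T}(s)$ through the termwise decomposition $\mathscr{T}_{n}(t+s)x=\sum_{k=0}^{n}\mathscr{T}_{k}(t)\mathscr{T}_{n-k}(s)x$, itself a direct induction using a change of variables in the defining integral. The integral equation for $\mathscr{T}(t)$ then follows from termwise summation plus dominated convergence; differentiating it at $t=0$ on $D(A)$ identifies the generator as $A+B$ on $D(A)$. Finally, summing the geometric series of the $L^{1}$ estimates yields $\int_{0}^{\tau}\|B\mathscr{T}(t)x\|\,dt\le\sum_{n\ge 0}\gamma^{n+1}\|x\|=\tfrac{\gamma}{1-\gamma}\|x\|$.

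The main obstacle will be the regularity bookkeeping: to make sense of $B\mathscr{T}_{n}(s)x$ pointwise one must confirm that each iterate maps $D(A)$ into $D(A)$, and to apply the Miyadera--Voigt estimate to vectors of the form $B\mathscr{T}_{n}(s)x\in X$ (which are not a priori in $D(A)$) one must use the closedness of $A$ together with a density argument to extend (\ref{MydrVgt}) to all of $X$. Once this framework is secured, the remainder of the argument reduces to the geometric-series calculation sketched above together with the standard identification of the generator via its resolvent.
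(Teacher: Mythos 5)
First, a point of reference: the paper does not prove this theorem at all — it is imported from Miyadera and Voigt via \cite[Corollary 3.16]{EngelNagel;2000} — so the comparison is with that standard proof rather than with any argument in the paper. Your Dyson--Phillips strategy is exactly the classical one, and the geometric bookkeeping (estimates of order $\gamma^{n+1}$ summing to $\gamma/(1-\gamma)$) is the right skeleton, but the concrete form of your iteration has a genuine gap. Defining $\mathscr{T}_{n+1}(t)x=\int_{0}^{t}\mathbb{T}(t-s)B\mathscr{T}_{n}(s)x\,ds$ forces you to apply $B$ to the vectors $\mathscr{T}_{n}(s)x$, and already $\mathscr{T}_{1}(s)x=\int_{0}^{s}\mathbb{T}(s-r)B\mathbb{T}(r)x\,dr$ is a mild convolution whose integrand is only $X$-valued, so there is no reason for it to lie in $D(A)$; your proposed remedy that ``each iterate maps $D(A)$ into $D(A)$'' is false in general (it is exactly the reason mild solutions need not be classical). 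The second remedy, extending (\ref{MydrVgt}) by density, does not rescue the pointwise expression either: what extends is the composite map $x\mapsto B\mathbb{T}(\cdot)x$ from $D(A)$ to a bounded map $X\to L^{1}([0,\tau];X)$, not the operator $B$ itself, so the symbol $B\mathscr{T}_{n}(s)x$ still has no meaning unless you reinterpret the entire iteration at the level of $L^{1}$-valued maps and verify afterwards that the formal object coincides with $B$ applied to $\mathscr{T}(s)x$.

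The standard proof avoids this entirely by convolving on the other side: one sets $\mathscr{T}_{0}(t)=\mathbb{T}(t)$ and $\mathscr{T}_{n+1}(t)x=\int_{0}^{t}\mathscr{T}_{n}(t-s)B\mathbb{T}(s)x\,ds$, so that $B$ only ever acts on $\mathbb{T}(s)x$ with $x\in D(A)$, where (\ref{MydrVgt}) applies directly; this gives $\sup_{t\le\tau}\|\mathscr{T}_{n}(t)\|\le M\gamma^{n}$, norm convergence of the series, the semigroup property, and the variation of constants formula in the form $\mathscr{T}(t)x=\mathbb{T}(t)x+\int_{0}^{t}\mathscr{T}(t-s)B\mathbb{T}(s)x\,ds$. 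The generator is then identified as $A+B$ with domain $D(A)$ via a resolvent (Neumann series) argument — your ``differentiate at $t=0$'' only shows the generator extends $A+B$ — and only after one knows that $\mathscr{T}(t)$ leaves $D(A)=D(A+B)$ invariant do the formula stated in the theorem, with $B\mathscr{T}(s)x$ inside the integral, and the bound $\int_{0}^{\tau}\|B\mathscr{T}(t)x\|\,dt\le\frac{\gamma}{1-\gamma}\|x\|$ become meaningful and provable. So your outline is the correct classical route, but it needs this reversal of the convolution (or the equivalent extension machinery carried out in full) to go through.
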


Our current goal is to investigate the well-posedness of \eqref{RIE} with the
semigroup approach as given in \cite{batkai2001JMAA}. To do this
we will rewrite \eqref{RIE} as an abstract Cauchy problem $\dot{W}\left(t\right)=\mathcal{A}W\left(t\right)$
in an appropriate Banach space $\mathcal{X}$. To obtain this assertion, we use the function $ g $ defined in   \eqref{eq:Fct_g}, as a consequence, one has 
$$
\frac{\partial}{\partial t}g\left(t,s\right)=b\left(s\right)Lx_{t}+\frac{\partial}{\partial s}g\left(t,s\right).
$$
Taking into consideration the equation  (\ref{eq:x_t}), we can rewrite (\ref{RIE}) as follows 

$$
\begin{cases}
\dot{x}\left(t\right)=Ax\left(t\right)+Kx_{t}+g\left(t\right)\left(0\right)\\
\dot{x}_{t}=D_{\theta}x_{t}\\
\dot{g}\left(t\right)=D_{s}g\left(t\right)+b\left(\cdot\right)Lx_{t}\\
x\left(0\right)=x,\,x_{0}=\varphi\,\text{and}\,g\left(0\right)=0.
\end{cases};\,\,t\geq0,
$$
with 
\[
D_{\theta}=\frac{\partial}{\partial\theta};\,\,\,D\left(D_{\theta}\right)=\left\{ \varphi\in W^{1,p}\left([-1,0],X\right)\mid\varphi\left(0\right)=0\right\} 
\]
and 
\[
D_{s}=\frac{\partial}{\partial s};\,\,\,D\left(D_{s}\right)=W^{1,p}\left(\mathbb{R}_{+},X\right).
\]
Having chosen the product space 
$$
\mathcal{X}=X\times L^{p}\left(\left[-1,0\right],X\right)\times L^{p}\left(\mathbb{R}_{+},X\right),
$$
which is a Banach space with the norm 
$$
\left\Vert \begin{pmatrix}\chi\\
\psi\\
h
\end{pmatrix}\right\Vert =\|\chi\|+\|\psi\|_{L^{p}\left([-1,0],X\right)}+\|h\|_{L^{p}\left(\mathbb{R}_{+},X\right)},\quad\left(\chi,\psi,h\right)\in\mathcal{X},
$$

and set $\Upsilon\left(t\right)=\begin{pmatrix}x\left(t\right)\\
x_{t}\\
g\left(t\right)
\end{pmatrix}$, equation (\ref{RIE}) is rewritten in   $\mathcal{X}$  by the following Cauchy problem 
\begin{equation}
\begin{cases}
\dot{\Upsilon}\left(t\right)=\mathcal{A}\Upsilon\left(t\right)\\
\Upsilon\left(0\right)=\begin{pmatrix}x\\
\varphi\\
0
\end{pmatrix}
\end{cases};\,\,t\geq0,\label{ACP}
\end{equation}



with 

\begin{equation}   
\mathcal{A}=\begin{pmatrix}A & K & \delta_{0}\\
0 & D_{\theta} & 0\\
0 & b\left(\cdot\right)L & D_{s}
\end{pmatrix}\label{eq:Acali}
\end{equation}
and

\begin{equation} D\left(\mathcal{A}\right)=\left\{ \begin{pmatrix}\chi\\
\psi\\
h
\end{pmatrix}\in D\left(A\right)\times W^{1,p}\left([-1,0],X\right)\times W^{1,p}\left(\mathbb{R}_{+},X\right)\,;\,\psi\left(0\right)=\chi\right\} .
\label{eq:Dom-Acali} \end{equation}   

Here $\delta_{0}$ denotes the Dirac distribution, i.e., $\delta_{0}(f)=f(0)$
for each $f\in W^{1,p}\left(\mathbb{R}_{+},X\right)$. \\

When $\mathcal{A}$ generates a $C_{0}$-semigroup $(\mathscr{T}(t))_{t\ge0}$
on $\mathcal{X}$,   the mild solution of the abstract Cauchy problem
(\ref{ACP}) is written by 
$
\Upsilon \left(t\right)=\mathscr{T}(t)\Upsilon\left(0\right),\label{eq:AbstCauchySolut}
$
and it is a classical solution when $\Upsilon\left(0\right)\in D\left(\mathcal{A}\right)$. For more background on Semigroup theory we refer the reader to \cite{A.Pazy;1983,Curtain.Zwart;1995,EngelNagel;2000,staffans2005,TucsnakWeiss2009}. \\

Now, $\mathcal{A}$ can be expressed as a perturbation of the $C_{0}$-semigroup generator $\mathcal{A}_{0}$ by the operator $ \mathcal{B} $, as follows

\begin{equation}
\mathcal{A}=\mathcal{A}_{0}+\mathcal{B}
\end{equation}
with 
\[
\mathcal{A}_{0}=\left(\begin{array}{cc}
\boldsymbol{A} & \begin{matrix}0\\
0
\end{matrix}\\
\hdashline\begin{matrix}0 & 0\end{matrix} & D_{s}
\end{array}\right),\quad\mathcal{B}=\left(\begin{array}{cc}
\begin{pmatrix}0 & K\\
0 & 0
\end{pmatrix} & \begin{matrix}\delta_{0}\\
0
\end{matrix}\\
\hdashline\begin{matrix}0 & b\left(\cdot\right)L\end{matrix} & 0
\end{array}\right)
\]


and $ D\left(\mathcal{A}\right)=D\left(\mathcal{A}_{0}\right)=D\left(\mathcal{B}\right)=D\left(\boldsymbol{A}\right)\times W^{1,p}\left(\mathbb{R}_{+},X\right)$.\\
Here, $\boldsymbol{A}$ denotes  the operator $\begin{pmatrix}A & 0\\
0 & D_{\theta}
\end{pmatrix}$ with $D\left(\boldsymbol{A}\right)=\left\{ \begin{pmatrix}x\\
\varphi
\end{pmatrix}\in D\left(A\right)\times W^{1,p}\left([-1,0],X\right)\,;\,\varphi\left(0\right)=x\right\}.$
From \cite{batkai2001JMAA},   $\boldsymbol{A}$ generates
a $C_{0}$-semigroup $\left(\boldsymbol{T}\left(t\right)\right)_{t\geq0}$
on $\boldsymbol{X}=X\times L^{p}\left(\left[-1,0\right],X\right)$,
with 
$$
\boldsymbol{\boldsymbol{T}}\left(t\right)=\begin{pmatrix}\mathbb{T}(t) & 0\\
\mathbb{T}_{t} & \mathbb{S}^{0}\left(t\right)
\end{pmatrix},
$$

where $(\mathbb{S}^{0}\left(t\right))_{t\geq0}$ is the nilpotent left shift semigroup on $L^{p}\left(\left[-1,0\right],X\right)$ and $\mathbb{T}_{t}:X\rightarrow L^{p}\left(\left[-1,0\right],X\right)$
is defined by 
$$
(\mathbb{T}_{t}\,x)(\tau)=\begin{cases}
\mathbb{T}(t+\tau)x, & -t<\tau\leq0,\\
0, & -1\leq\tau\leq-t.
\end{cases}
$$
The diagonal of $\mathcal{A }_0  $ implies that $\mathcal{A }_0  $ generates the  $C_{0}$-semigroup 
$\left(\mathcal{T}_{0}\left(t\right)\right)_{t\geq0}$ defined by

$ \mathcal{T}_{0}\left(t\right)=\left(\begin{array}{c:c}
\boldsymbol{\boldsymbol{T}}\left(t\right) & \begin{matrix}0\\
0
\end{matrix}\\
\hdashline \begin{matrix}0 & 0\end{matrix} & \mathbb{S}\left(t\right)
\end{array}\right) $, here $\left(\mathbb{S}\left(t\right)f\right)(r)=f(t+r)$ denotes
the translation semigroup on $L^{p}\left(\mathbb{R}_{+},X\right)$.
\\

Throughout this paper, the following assumption will be required.
\begin{description}
	\item [{Assumption :}] 	A bounded operator $\varXi$ from $W^{1,p}([-1,0],X)$
	to $X$ satisfies the assumption $\left(M_{q}\right)$ for $ q\geq 1 $, if there exist,
	$\alpha>0$ and $\gamma_{\Xi}\left(\alpha\right)\geq 0$ such that 
	\[
	\left(M_{q}\right)\qquad\int_{0}^{\alpha}\|\varXi\left(\mathbb{T}_{\tau}\,x+\mathbb{S}^{0}\left(\tau\right)\varphi\right)\|^{q}\,d\tau\leq\gamma_{\varXi}^{q}\left(\alpha\right)\left\Vert \begin{pmatrix}x\\
	\varphi
	\end{pmatrix}\right\Vert ^{q}
	\]
	for all $\begin{pmatrix}x\\
	\varphi
	\end{pmatrix}\in D\left(\boldsymbol{A}\right)$.\\
	In the case when $ q=1 $, we assume also that  
	$\displaystyle\lim_{\alpha \to 0}  \gamma_{\Xi}\left(\alpha\right)=0$.	
	
\end{description}
In the following we will see two examples of operators verifying assumption $\left(M_{q}\right)$, and we refer to \cite{batkai2001JMAA} for many other interesting examples.

\begin{example}
\begin{enumerate}
	\item Let $\varXi:W^{1,2}\left([-1,0],X\right)\rightarrow X$, be the operator 
	defined by $\varXi=\delta_{-1}$, 	For $f\in D(A)$ and $\psi\in W^{1,2}([-1,0],X)$ satisfying $\psi(0)=f$, it is easy to show that there is $0<\alpha<1$ such that 
	\begin{align}
	\int_{0}^{\alpha}\|\varXi\left(\mathbb{T}_{\tau}\,f+\mathbb{S}^{0}\left(\tau\right)\psi\right)\|^{2}d\tau & =\int_{0}^{\alpha}\|\psi(\tau-1)\|^{2}\,d\tau,\nonumber \\
	& \leq\|\psi\|_{L^{2}\left(\left[-1,0\right],X\right)}^{2},\label{eq:myExmple}\\
	& \leq\left\Vert \begin{pmatrix}f\\
	\psi
	\end{pmatrix}\right\Vert ^{2}.\nonumber 
	\end{align}
	This implies that assumption $\left(M_{2}\right)$ is satisfied by $\varXi$.
	\item 	Let  $\eta:[-1,0]\rightarrow X$ be of bounded variation and  $\varXi:C([-1,0],X)\rightarrow X$ be the bounded linear
	operator given by the Riemann-Stieltjes integral 
	$ 	\varXi(\varphi)=\int_{-1}^{0}d\eta(\theta)\,\varphi(\theta) $ for all $\varphi\in C([-1,0],X).\label{Stieljes} $
	Since $W^{1,2}([-1,0],X)$ is continuously embedded in $C([-1,0],X)$, $\varXi$ defines a bounded operator from $W^{1,2}([-1,0],X)$ to $X$, and 
	\begin{equation}
	\int_{0}^{\alpha}\|\varXi\left(\mathbb{T}_{\tau}\,x+\mathbb{S}^{0}\left(\tau\right)\varphi\right)\|\,d\tau\leq\alpha^{\frac{1}{2}}M\,|\eta|([-1,0])\left\Vert \begin{pmatrix}x\\
	\varphi
	\end{pmatrix}\right\Vert, \label{cond.miyadera}
	\end{equation}
	for all $0<\alpha<1$, where  $M =\sup_{\tau\in[0,1]}\|\mathbb{T}(\tau)\|$
	and $|\eta|$ is the positive Borel measure on $[-1,0]$ defined by
	the total variation of $\eta$. It is clear that $\displaystyle\lim_{\alpha \to 0}  \gamma_{\varXi}\left(\alpha\right)=0$ with  $\gamma_{\varXi}\left(\alpha\right)=\alpha^{\frac{1}{2}}M\,|\eta|([-1,0])$. Then assumption
	$(M_{1})$ is satisfied.
\end{enumerate}	
\end{example}

The following Lemmas are necessary to establish the section's main result,

\begin{lemma}
	\label{lem:Mydvgt} If $L$ and $K$ satisfy assumption $(M_{1})$,
	then $\mathcal{B}$ is a Miyadera-Voigt perturbation for $\left(\mathcal{T}_{0}\left(t\right)\right)_{t\geq0}$.
\end{lemma}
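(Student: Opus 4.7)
The plan is to verify the Miyadera--Voigt bound \eqref{MydrVgt} for $\mathcal{B}$ against $(\mathcal{T}_{0}(t))_{t\ge 0}$ by a direct computation, splitting the norm into three parts and sending $\tau$ to $0$.

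First I would write out $\mathcal{B}\mathcal{T}_{0}(t)(\chi,\psi,h)^{T}$ explicitly for $(\chi,\psi,h)^{T}\in D(\mathcal{A}_{0})=D(\boldsymbol{A})\times W^{1,p}(\mathbb{R}_{+},X)$. Using the block form of $\mathcal{T}_{0}(t)$ and of $\mathcal{B}$, together with $\delta_{0}(\mathbb{S}(t)h)=(\mathbb{S}(t)h)(0)=h(t)$ (which is well-defined because $h\in W^{1,p}(\mathbb{R}_{+},X)$ has a continuous representative), one gets
$$
\mathcal{B}\mathcal{T}_{0}(t)\begin{pmatrix}\chi\\ \psi\\ h\end{pmatrix}
=\begin{pmatrix} K(\mathbb{T}_{t}\chi+\mathbb{S}^{0}(t)\psi)+h(t)\\ 0\\ b(\cdot)\,L(\mathbb{T}_{t}\chi+\mathbb{S}^{0}(t)\psi)\end{pmatrix}.
$$
Taking the $\mathcal{X}$-norm and integrating over $[0,\tau]$ then produces three terms to bound separately.

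For the first component I would apply assumption $(M_{1})$ to $K$ directly, obtaining
$\int_{0}^{\tau}\|K(\mathbb{T}_{t}\chi+\mathbb{S}^{0}(t)\psi)\|\,dt\le\gamma_{K}(\tau)\|(\chi,\psi)^{T}\|$.
The scalar part $\int_{0}^{\tau}\|h(t)\|\,dt$ I would handle by Hölder's inequality with conjugate exponent $p'$, giving $\tau^{1/p'}\|h\|_{L^{p}(\mathbb{R}_{+},X)}$. For the third component I would factor the $L^{p}(\mathbb{R}_{+},X)$-norm as $\|b(\cdot)L(\cdots)\|_{L^{p}}=\|b\|_{L^{p}(\mathbb{R}_{+})}\,\|L(\mathbb{T}_{t}\chi+\mathbb{S}^{0}(t)\psi)\|_{X}$ (using $b\in W^{1,p}\subset L^{p}$) and then apply $(M_{1})$ to $L$ to get the bound $\|b\|_{L^{p}}\gamma_{L}(\tau)\|(\chi,\psi)^{T}\|$.

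Summing the three estimates yields
$$
\int_{0}^{\tau}\bigl\|\mathcal{B}\mathcal{T}_{0}(t)(\chi,\psi,h)^{T}\bigr\|_{\mathcal{X}}\,dt
\le\bigl(\gamma_{K}(\tau)+\|b\|_{L^{p}}\gamma_{L}(\tau)+\tau^{1/p'}\bigr)\left\|\begin{pmatrix}\chi\\ \psi\\ h\end{pmatrix}\right\|_{\mathcal{X}}.
$$
Since $\gamma_{K}(\alpha),\gamma_{L}(\alpha)\to 0$ as $\alpha\to 0$ by the extra clause of assumption $(M_{1})$ for $q=1$, and $\tau^{1/p'}\to 0$, the bracketed constant tends to $0$ and can be made strictly less than $1$ by choosing $\tau$ sufficiently small. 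This is precisely \eqref{MydrVgt}.

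The one point requiring care is the $\delta_{0}$-term: it does not fit the template of $(M_{1})$, so I cannot pass it through the assumption on $L$ or $K$ and must treat it by Hölder. This is the only step not reducible to a direct citation of $(M_{1})$, and is the mild obstacle in the argument.
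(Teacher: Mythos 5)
Your proposal is correct and follows essentially the same route as the paper: the same explicit computation of $\mathcal{B}\mathcal{T}_{0}(t)$ on $D(\mathcal{A})$, the same three-way splitting with Hölder's inequality handling the $\delta_{0}$-term as $\tau^{1-\frac{1}{p}}\|h\|_{L^{p}(\mathbb{R}_{+},X)}$, the factorization $\|b(\cdot)L(\cdot)\|_{L^{p}}=\|b\|_{L^{p}}\|L(\cdot)\|$, and assumption $(M_{1})$ applied to $K$ and $L$ with the limit clause used to make the total constant strictly less than $1$ for small $\tau$. The only cosmetic difference is that you sum the three constants where the paper takes a maximum; your summed bound is, if anything, the more careful way to state it.
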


\begin{proof}
	Let $\begin{pmatrix}\chi\\
	\psi\\
	h
	\end{pmatrix}\in D\left(\mathcal{A}\right)$, $L$ and $K$ satisfy assumption $(M_{1})$ then there is $\alpha>0$
	such that  
	
	\begin{align*}
		\int_{0}^{\alpha}\left\Vert \mathcal{B}\mathcal{T}_{0}\left(\tau\right)\begin{pmatrix}\chi\\
			\psi\\
			h
		\end{pmatrix}\right\Vert d\tau  = & \int_{0}^{\alpha}\left\Vert \begin{pmatrix}K\left(\mathbb{T}_{\tau}\,\chi+\mathbb{S}^{0}\left(\tau\right)\psi\right)+h\left(\tau\right)\\
			0\\
			b\left(\cdot\right)L\left(\mathbb{T}_{\tau}\,\chi+\mathbb{S}^{0}\left(\tau\right)\psi\right)
		\end{pmatrix}\right\Vert d\tau\\
		 \leq & \alpha^{1-\frac{1}{p}}\left\Vert h\right\Vert _{L^{p}\left(\mathbb{R}_{+},X\right)}+\int_{0}^{\alpha}\left\Vert K\left(\mathbb{T}_{\tau}\,\chi+\mathbb{S}^{0}\left(\tau\right)\psi\right)\right\Vert d\tau\\
		  & +\left\Vert b\right\Vert _{L^{p}\left(\mathbb{R}_{+}\right)}\int_{0}^{\alpha}\left\Vert L\left(\mathbb{T}_{\tau}\,\chi+\mathbb{S}^{0}\left(\tau\right)\psi\right)\right\Vert d\tau\\
		 \leq & \alpha^{1-\frac{1}{p}}\left\Vert h\right\Vert _{L^{p}\left(\mathbb{R}_{+},X\right)}+\gamma_{K}\left(\alpha\right)\left\Vert \begin{pmatrix}\chi\\
			\psi
		\end{pmatrix}\right\Vert +\gamma_{L}\left(\alpha\right)\left\Vert b\right\Vert _{L^{p}\left(\mathbb{R}_{+}\right)}\left\Vert \begin{pmatrix}\chi\\
			\psi
		\end{pmatrix}\right\Vert \\
		 \leq & \gamma\left(\alpha\right)\left\Vert \begin{pmatrix}\chi\\
			\psi\\
			h
		\end{pmatrix}\right\Vert 
	\end{align*}
	With $\gamma\left(\alpha\right)=\max\left(\gamma_{K}\left(\alpha\right);\gamma_{L}\left(\alpha\right)\left\Vert b\right\Vert _{L^{p}\left(\mathbb{R}_{+}\right)};\alpha^{1-\frac{1}{p}}\right)$,
	choose now $\alpha$ small enough such that $0\leq\gamma\left(\alpha\right)<1$,
	then $\mathcal{B}$ is a Miyadera-Voigt perturbation for $\left(\mathcal{T}_{0}\left(t\right)\right)_{t\geq0}$.
\end{proof}
\begin{lemma}
	\label{lem:qobserv} If $L$ and $K$ satisfy  assumption $(M_{p})$ then $\mathcal{B}$ is a $p$-admissible observation  operator
	for $\left(\mathcal{T}_{0}\left(t\right)\right)_{t\geq0}$.
\end{lemma}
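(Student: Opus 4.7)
The plan is to mimic the structure of the proof of Lemma \ref{lem:Mydvgt}, but replace the $L^1$-in-time computation with an $L^p$-in-time one, invoking assumption $(M_p)$ (rather than $(M_1)$) for $L$ and $K$. The starting point is the same identity:
\[
\mathcal{B}\mathcal{T}_{0}(\tau)\begin{pmatrix}\chi\\ \psi\\ h\end{pmatrix}=\begin{pmatrix}K(\mathbb{T}_{\tau}\chi+\mathbb{S}^{0}(\tau)\psi)+h(\tau)\\ 0\\ b(\cdot)L(\mathbb{T}_{\tau}\chi+\mathbb{S}^{0}(\tau)\psi)\end{pmatrix}.
\]
By the definition of the norm on $\mathcal{X}$, I will take $p$-th powers and use the elementary inequality $(a+b+c)^p\le 3^{p-1}(a^p+b^p+c^p)$ to split the computation of $\int_{0}^{\alpha}\|\mathcal{B}\mathcal{T}_{0}(\tau)(\chi,\psi,h)^{\top}\|^{p}\,d\tau$ into three non-negative pieces.

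First I would handle the first (scalar-in-$X$) component. Applying $(a+b)^p\le 2^{p-1}(a^p+b^p)$ separates it into a $K$-term and a pure $h(\tau)$ term. Assumption $(M_p)$ applied to $K$ bounds $\int_0^\alpha\|K(\mathbb{T}_\tau\chi+\mathbb{S}^0(\tau)\psi)\|^p\,d\tau$ by $\gamma_K^p(\alpha)\|(\chi,\psi)^\top\|^p$, while the second piece is trivially bounded by $\|h\|_{L^p(\mathbb{R}_+,X)}^p$ since $\alpha<\infty$ and the integral is over $[0,\alpha]$.

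Second, I would treat the third component, which lives in $L^p(\mathbb{R}_+,X)$. The key factorisation is $\|b(\cdot)y\|_{L^p(\mathbb{R}_+,X)}=\|b\|_{L^p(\mathbb{R}_+)}\,\|y\|_X$ for $y\in X$, which turns the integral into $\|b\|_{L^p(\mathbb{R}_+)}^p\int_0^\alpha\|L(\mathbb{T}_\tau\chi+\mathbb{S}^0(\tau)\psi)\|^p\,d\tau$. Assumption $(M_p)$ for $L$ then gives the bound $\|b\|_{L^p(\mathbb{R}_+)}^p\,\gamma_L^p(\alpha)\|(\chi,\psi)^\top\|^p$.

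Finally, collecting the three bounds and using $\|(\chi,\psi)^\top\|\le\|(\chi,\psi,h)^\top\|$ and $\|h\|_{L^p(\mathbb{R}_+,X)}\le\|(\chi,\psi,h)^\top\|$, I would obtain
\[
\int_{0}^{\alpha}\Bigl\|\mathcal{B}\mathcal{T}_{0}(\tau)(\chi,\psi,h)^{\top}\Bigr\|^{p}\,d\tau\le\gamma^{p}(\alpha)\,\|(\chi,\psi,h)^\top\|^{p},
\]
for some constant $\gamma(\alpha)>0$ depending on $p$, $\gamma_K(\alpha)$, $\gamma_L(\alpha)$ and $\|b\|_{L^p(\mathbb{R}_+)}$. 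This is exactly \eqref{adm_obs} for $q=p$, $\tau=\alpha$, so $\mathcal{B}$ is $p$-admissible. There is no real obstacle beyond bookkeeping; the only point to be careful about is the $L^p$-decoupling in the first row, which, unlike the $q=1$ case in Lemma \ref{lem:Mydvgt}, requires the $(a+b)^p$ inequality instead of the triangle inequality, and therefore produces a constant $2^{p-1}$ (or $3^{p-1}$) which must be absorbed into $\gamma(\alpha)$.
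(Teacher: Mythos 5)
Your proposal is correct and follows essentially the same route as the paper: split the $\mathcal{X}$-norm into the $K$-plus-$h(\tau)$ component and the $b(\cdot)L$ component, absorb the elementary $p$-power inequalities into a constant, bound the $h$-piece by $\|h\|_{L^p(\mathbb{R}_+,X)}^p$, factor out $\|b\|_{L^p(\mathbb{R}_+)}^p$, and apply $(M_p)$ to $K$ and $L$. The only difference is cosmetic bookkeeping of constants (your $2^{p-1}$, $3^{p-1}$ versus the paper's single $4^p$ and a max), so no further comment is needed.
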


\begin{proof}
	Let $\begin{pmatrix}\chi\\
	\psi\\
	h
	\end{pmatrix}\in D\left(\mathcal{A}\right)$, $L$ and $K$ satisfy the Assumption $(M_{p})$   then
	there is $\alpha>0$ such that 
	
	\begin{align*}
		\int_{0}^{\alpha}\left\Vert \mathcal{B}\mathcal{T}_{0}\left(\tau\right)\begin{pmatrix}\chi\\
			\psi\\
			h
		\end{pmatrix}\right\Vert ^{p}d\tau  = & \int_{0}^{\alpha}\left\Vert \begin{pmatrix}K\left(\mathbb{T}_{\tau}\,\chi+\mathbb{S}^{0}\left(\tau\right)\psi\right)+h\left(\tau\right)\\
			0\\
			b\left(\cdot\right)L\left(\mathbb{T}_{\tau}\,\chi+\mathbb{S}^{0}\left(\tau\right)\psi\right)
		\end{pmatrix}\right\Vert ^{p}d\tau\\
		 \leq & 4^{p}\left(\left\Vert h\right\Vert _{L^{p}\left(\mathbb{R}_{+},X\right)}^{p}+\int_{0}^{\alpha}\left\Vert K\left(\mathbb{T}_{\tau}\,\chi+\mathbb{S}^{0}\left(\tau\right)\psi\right)\right\Vert ^{p}d\tau\right.\\
		  & \left.+\left\Vert b\right\Vert _{L^{p}\left(\mathbb{R}_{+}\right)}^{p}\int_{0}^{\alpha}\left\Vert L\left(\mathbb{T}_{\tau}\,\chi+\mathbb{S}^{0}\left(\tau\right)\psi\right)\right\Vert ^{p}d\tau\right)\\
		 \leq & 4^{p}\left(\left\Vert h\right\Vert _{L^{p}\left(\mathbb{R}_{+},X\right)}^{p}+\gamma_{K}^{p}\left(\alpha\right)\left\Vert \begin{pmatrix}\chi\\
			\psi
		\end{pmatrix}\right\Vert ^{p}+\gamma_{L}^{p}\left(\alpha\right)\left\Vert b\right\Vert _{L^{p}\left(\mathbb{R}_{+}\right)}^{p}\left\Vert \begin{pmatrix}\chi\\
			\psi
		\end{pmatrix}\right\Vert ^{p}\right)\\
		 \leq & \gamma^{p}\left(\alpha\right)\left\Vert \begin{pmatrix}\chi\\
			\psi\\
			h
		\end{pmatrix}\right\Vert ^{p}.
	\end{align*}
	With $\gamma\left(\alpha\right)=4\times\max\left(1;\gamma_{K}\left(\alpha\right);\gamma_{L}\left(\alpha\right)\left\Vert b\right\Vert _{L^{p}\left(\mathbb{R}_{+}\right)}\right)$.
	Then $\mathcal{B}$ is a $p$-admissible observation  operator for $\left(\mathcal{T}_{0}\left(t\right)\right)_{t\geq0}$.
\end{proof}
The main result of this section is the following Theorem. 
\begin{theorem}
	\label{thm:Wellposedness} If $L$ and $K$ satisfy   conditions
	of one of Lemma \ref{lem:Mydvgt} and Lemma \ref{lem:qobserv}, then the equation \eqref{RIE}
	is well-posed and the solution is writing by 
	\begin{equation}
	x\left(t\right)=\mathbb{T}(t)x+\int_{0}^{t}\mathbb{T}(t-s)\int_{0}^{s}b\left(s-\tau\right)Lx_{\tau}d\tau ds+\int_{0}^{t}\mathbb{T}(t-s)Kx_{s}ds,\label{eq:MildSolut}
	\end{equation}
	for $\begin{pmatrix}x\\
	\varphi
	\end{pmatrix}\in D\left(A\right)\times W^{1,p}\left([-1,0],X\right)$.
\end{theorem}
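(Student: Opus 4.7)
The plan is to apply the Miyadera--Voigt perturbation Theorem~\ref{thm:MYVoGT} to the splitting $\mathcal{A}=\mathcal{A}_{0}+\mathcal{B}$ set up just above the statement. Since $\mathcal{A}_{0}$ already generates the diagonal $C_{0}$-semigroup $(\mathcal{T}_{0}(t))_{t\geq 0}$ on $\mathcal{X}$, the generation question for $\mathcal{A}$ reduces to checking that $\mathcal{B}$ is a Miyadera--Voigt perturbation for $(\mathcal{T}_{0}(t))_{t\geq 0}$ under each of the two hypotheses offered.

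The two cases are then handled in parallel. Under assumption $(M_{1})$, Lemma~\ref{lem:Mydvgt} is precisely the required input. Under assumption $(M_{p})$ with $p>1$, Lemma~\ref{lem:qobserv} supplies $p$-admissibility of $\mathcal{B}$ for $(\mathcal{T}_{0}(t))_{t\geq 0}$, and Remark~\ref{rem:qobservation} (a one-line H\"older argument) upgrades this to Miyadera--Voigt; the smallness constant $\gamma<1$ needed by the theorem is produced automatically by shrinking $\alpha$, because the H\"older estimate carries a factor $\alpha^{1-1/p}\to 0$. Applying Theorem~\ref{thm:MYVoGT} then yields a $C_{0}$-semigroup $(\mathscr{T}(t))_{t\geq 0}$ generated by $\mathcal{A}$ on $\mathcal{X}$, together with the Dyson--Phillips identity
\[
\mathscr{T}(t)\Upsilon(0)=\mathcal{T}_{0}(t)\Upsilon(0)+\int_{0}^{t}\mathcal{T}_{0}(t-s)\,\mathcal{B}\,\mathscr{T}(s)\Upsilon(0)\,ds.
\]
This immediately gives well-posedness of the abstract Cauchy problem~\eqref{ACP}, and hence of~\eqref{RIE} through the reformulation performed earlier in the section.

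To read off the formula~\eqref{eq:MildSolut}, I would take $\Upsilon(0)=(x,\varphi,0)^{\top}$ and project the Dyson--Phillips identity onto the first coordinate. Using the explicit block form of $\boldsymbol{T}(t)$ and the vanishing of the third initial datum, the first coordinate of $\mathcal{T}_{0}(t)\Upsilon(0)$ reduces to $\mathbb{T}(t)x$. Writing $\mathscr{T}(s)\Upsilon(0)=(x(s),x_{s},g(s,\cdot))^{\top}$ and applying $\mathcal{B}$ gives as first coordinate $Kx_{s}+g(s)(0)=Kx_{s}+\int_{0}^{s}b(s-\tau)Lx_{\tau}\,d\tau$, and substituting this back into the projected integral yields exactly~\eqref{eq:MildSolut}.

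The main obstacle I anticipate is precisely justifying that last identification: one needs to know that the second and third coordinates of $\mathscr{T}(s)\Upsilon(0)$ really coincide with the history $x_{s}$ and the convolution auxiliary $g(s,\cdot)$ of~\eqref{eq:Fct_g}. This amounts to showing that these coordinates satisfy the shift equation~\eqref{eq:x_t} and the shift-plus-source equation for $g$ that motivated the reformulation; the coupling boundary condition $\psi(0)=\chi$ built into $D(\mathcal{A})$ is what glues the first two coordinates together correctly. For $\Upsilon(0)\in D(\mathcal{A})$ this follows from uniqueness of classical solutions to~\eqref{ACP}, and the formula~\eqref{eq:MildSolut} then extends to all admissible initial data by density and strong continuity of $(\mathscr{T}(t))_{t\geq 0}$.
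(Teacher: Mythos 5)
Your proposal follows essentially the same route as the paper's proof: the splitting $\mathcal{A}=\mathcal{A}_{0}+\mathcal{B}$, Lemma \ref{lem:Mydvgt} or Lemma \ref{lem:qobserv} combined with Remark \ref{rem:qobservation}, Theorem \ref{thm:MYVoGT} to get generation and the variation-of-constants identity, and projection onto the first coordinate to obtain \eqref{eq:MildSolut}. The only difference is that you spell out (via uniqueness of classical solutions and density) the identification of the second and third coordinates with $x_{s}$ and $g(s,\cdot)$, a step the paper passes over by simply invoking \eqref{ACP}; this is a welcome clarification but not a different argument.
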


\begin{proof}
	Let $\begin{pmatrix}x\\
	\varphi
	\end{pmatrix}\in D\left(A\right)\times W^{1,p}\left([-1,0],X\right)$, we now that $\mathcal{A}=\mathcal{A}_{0}+\mathcal{B}$ and  $ \mathcal{A}_{0} $ generates the following semigroup
	$$
	\mathcal{T}_{0}\left(t\right)=\begin{pmatrix}\mathbb{T}(t) & 0 & 0\\
	\mathbb{T}_{t} & \mathbb{S}^{0}\left(t\right) & 0\\
	0 & 0 & \mathbb{S}\left(t\right)
	\end{pmatrix},
	$$
	by  Lemma \ref{lem:Mydvgt} or Lemma \ref{lem:qobserv},  Remark \ref{rem:qobservation} and  applying Theorem \ref{thm:MYVoGT}, we conclude that $ \mathcal{A} $ generates a strongly continuous semigroup $ 	\left(\mathcal{T}\left(t\right)\right)_{t\geq 0} $ satisfying
	$$
	\mathcal{T}\left(t\right)\begin{pmatrix}x\\
	\varphi\\
	0
	\end{pmatrix}=\mathcal{T}_{0}\left(t\right)\begin{pmatrix}x\\
	\varphi\\
	0
	\end{pmatrix}+\int_{0}^{t}\mathcal{T}_{0}\left(t-s\right)\mathcal{B}\mathcal{T}\left(s\right)\begin{pmatrix}x\\
	\varphi\\
	0
	\end{pmatrix}ds
	$$
	and by equation (\ref{ACP}) we have 
	$$
	\begin{pmatrix}x\left(t\right)\\
	x_{t}\\
	g\left(t\right)
	\end{pmatrix}=\mathcal{T}_{0}\left(t\right)\begin{pmatrix}x\\
	\varphi\\
	0
	\end{pmatrix}+\int_{0}^{t}\mathcal{T}_{0}\left(t-s\right)\mathcal{B}\begin{pmatrix}x\left(s\right)\\
	x_{s}\\
	g\left(s\right)
	\end{pmatrix}ds
	$$
	which implies 
	\begin{align*}
		\begin{pmatrix}x\left(t\right)\\
			x_{t}\\
			g\left(t\right)
		\end{pmatrix}  = & \begin{pmatrix}\mathbb{T}(t)x\\
			\mathbb{T}_{t}x+\mathbb{S}^{0}\left(t\right)\varphi\\
			0
		\end{pmatrix}+\int_{0}^{t}\begin{pmatrix}\mathbb{T}(t-s)\left(Kx_{s}+\delta_{0}g\left(s\right)\right)\\
			\mathbb{T}_{t-s}\left(Kx_{s}+\delta_{0}g\left(s\right)\right)\\
			\mathbb{S}\left(t-s\right)b\left(\cdot\right)Lx_{s}
		\end{pmatrix}ds\\
		 = & \begin{pmatrix}\mathbb{T}(t)x+\int_{0}^{t}\mathbb{T}(t-s)\left(Kx_{s}+\delta_{0}g\left(s\right)\right)ds\\
			\mathbb{T}_{t}x+\mathbb{S}^{0}\left(t\right)\varphi+\int_{0}^{t}\mathbb{T}_{t-s}\left(Kx_{s}+\delta_{0}g\left(s\right)\right)ds\\
			\int_{0}^{t}\mathbb{S}\left(t-s\right)b\left(\cdot\right)Lx_{s}ds
		\end{pmatrix}.
	\end{align*}
	The solution is the first component given by 
	\begin{align*}
	x\left(t\right) & =\mathbb{T}(t)x+\int_{0}^{t}\mathbb{T}(t-s)\left(Kx_{s}+\delta_{0}g\left(s\right)\right)ds\\
	& =\mathbb{T}(t)x+\int_{0}^{t}\mathbb{T}(t-s)\int_{0}^{s}b\left(s-\tau\right)Lx_{\tau}d\tau ds+\int_{0}^{t}\mathbb{T}(t-s)Kx_{s}ds
	\end{align*}
	which ends the proof. 
\end{proof}

\section{Spectral analysis.}

In this section, we consider \eqref{RIE} is well-posed and  we calculate the resolvent set $\rho\left(\mathcal{A}\right)$
and resolvent operator $R\left(\lambda,\mathcal{A}\right)$ of the
generator $\mathcal{A}$ and we derive an interesting formula similar
that have been obtained in \cite[Proposition 3.19, page 56]{batkai2005semigroups}.
We will need throughout this section the Laplace transform of the scalar kernel $b(\cdot)$, for this we must assume that, There is $\beta \in \mathbb{R}$ such that $b(\cdot) e^{-\beta \cdot} \in L^{1}\left(\mathbb{R}^{+}\right)$ and we note this Laplace transform by $\hat{b}\left(\cdot\right)$ defined by
$$
\hat{b}(\lambda)=\int_{0}^{\infty} e^{-\lambda t} b(t) d t, \lambda \in \mathbb{C}_{\beta}.
$$ 
The main result of this section is the following Theorem. 
\begin{theorem}
	\label{thm:Spectr} Let $(\mathcal{A},D(\mathcal{A}))$ be the operator
	defined by (\ref{eq:Acali}) and (\ref{eq:Dom-Acali}). For $\lambda\in\mathbb{C}_{0}\cap\mathbb{C}_{\beta}\cap\rho(A)$
	we have 
	$$
	\lambda\in\rho\left(\mathcal{A}\right)\text{ \ensuremath{\iff} }1\in\rho\left(e_{\lambda}R\left(\lambda,A\right)\left(K+\hat{b}\left(\lambda\right)L\right)\right)\text{ \ensuremath{\iff} }\lambda\in\rho\left(A+\left(K+\hat{b}\left(\lambda\right)L\right)e_{\lambda}\right)
	$$
	where $\left(e_{\lambda}x\right)(\theta)=e^{\lambda\theta}x$ for
	$x\in X$ and $\theta\in[-1,0]$. 
	Moreover, for
	$\lambda\in\rho(\mathcal{A})$ the resolvent $$R(\lambda,\mathcal{A})=$$
	\[
	\left(\begin{array}{c:c:c}
	R_{\lambda} & R_{\lambda}\Sigma_{\lambda}R\left(\lambda,D_{\theta}\right) & \begin{array}{c}
	\left[R_{\lambda}\left(K_{\lambda}+L_{\lambda}\right)R\left(\lambda,A\right)\delta_{0}\right.\\
	\left.+R\left(\lambda,A\right)\delta_{0}\right]R\left(\lambda,D_{s}\right)
	\end{array}\\
	\hdashline e_{\lambda}R_{\lambda} & \begin{array}{c}
	\left[e_{\lambda}R_{\lambda}\Sigma_{\lambda}\right.\\
	\left.+I\right]R\left(\lambda,D_{\theta}\right)
	\end{array} & e_{\lambda}R_{\lambda}\delta_{0}R\left(\lambda,D_{s}\right)\\
	\hdashline R\left(\lambda,D_{s}\right)L_{\lambda}R_{\lambda}R\left(\lambda,D_{s}\right) & \begin{array}{c}
	R\left(\lambda,D_{s}\right)\left[b\left(\cdot\right)L\right.\\
	\left.+L_{\lambda}R_{\lambda}\Sigma_{\lambda}\right]R\left(\lambda,D_{\theta}\right)
	\end{array} & \begin{array}{c}
	R\left(\lambda,D_{s}\right)\left[I\right.\\
	\left.+b\left(\cdot\right)LR_{\lambda}\delta_{0}\right]R\left(\lambda,D_{s}\right)
	\end{array}
	\end{array}\right)
	\]
%
%
	with  
	$$
	\begin{cases}
	\Sigma_{\lambda}=K+\hat{b}\left(\lambda\right)L,\\
	R_{\lambda}=R\left(\lambda,A+\Sigma_{\lambda}e_{\lambda}\right),\\
	L_{\lambda}=\hat{b}\left(\lambda\right)Le_{\lambda},\\
	K_{\lambda}=Ke_{\lambda}.
	\end{cases}
	$$
	
\end{theorem}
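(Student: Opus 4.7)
The plan is to solve the resolvent equation $(\lambda-\mathcal{A})W=F$ componentwise, where $W=(\chi,\psi,h)^{T}\in D(\mathcal{A})$ and $F=(f,g,k)^{T}\in\mathcal{X}$, and then read off both the characterization of $\rho(\mathcal{A})$ and the $3\times 3$ block formula for $R(\lambda,\mathcal{A})$. Written out, the system is
\[
(\lambda-A)\chi-K\psi-\delta_{0}h=f,\qquad (\lambda-D_{\theta})\psi=g,\qquad (\lambda-D_{s})h-b(\cdot)L\psi=k,
\]
subject to $\psi(0)=\chi$. Since $\Re\lambda>0$, both $D_{\theta}$ (with its boundary condition $\psi(0)=0$) and $D_{s}$ on $L^{p}(\mathbb{R}_{+},X)$ are invertible at $\lambda$, and their resolvents are known.

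First I would decouple the last two components. Because any $\psi\in W^{1,p}([-1,0],X)$ decomposes as $\psi=e_{\lambda}\chi+\psi_{0}$ with $\psi_{0}(0)=0$, the middle equation gives $\psi=e_{\lambda}\chi+R(\lambda,D_{\theta})g$; analogously $h=R(\lambda,D_{s})\bigl(b(\cdot)L\psi+k\bigr)$. Substituting these into the first equation and using the crucial identity
\[
\delta_{0}R(\lambda,D_{s})(b(\cdot)L\psi)=\int_{0}^{\infty}e^{-\lambda\tau}b(\tau)L\psi\,d\tau=\hat{b}(\lambda)L\psi,
\]
I obtain
\[
(\lambda-A-\Sigma_{\lambda}e_{\lambda})\chi=f+\Sigma_{\lambda}R(\lambda,D_{\theta})g+\delta_{0}R(\lambda,D_{s})k,
\]
with $\Sigma_{\lambda}=K+\hat{b}(\lambda)L$. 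This shows the whole resolvent problem is solvable uniquely iff $\lambda\in\rho(A+\Sigma_{\lambda}e_{\lambda})$, giving the third equivalence; the first equivalence then follows from the factorization $\lambda-A-\Sigma_{\lambda}e_{\lambda}=(\lambda-A)\bigl(I-R(\lambda,A)\Sigma_{\lambda}e_{\lambda}\bigr)$ on $X$ (valid since $\lambda\in\rho(A)$), together with the standard spectral identity $\sigma(BC)\setminus\{0\}=\sigma(CB)\setminus\{0\}$ applied to $B=R(\lambda,A)\Sigma_{\lambda}$ and $C=e_{\lambda}$, which swaps the two factors to produce $e_{\lambda}R(\lambda,A)\Sigma_{\lambda}$.

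Once $\lambda\in\rho(\mathcal{A})$ is established, I would set $\chi=R_{\lambda}\bigl[f+\Sigma_{\lambda}R(\lambda,D_{\theta})g+\delta_{0}R(\lambda,D_{s})k\bigr]$, and back-substitute into $\psi=e_{\lambda}\chi+R(\lambda,D_{\theta})g$ and $h=R(\lambda,D_{s})b(\cdot)L\psi+R(\lambda,D_{s})k$. Collecting the coefficients of $f,g,k$ in each of the three outputs produces the nine blocks of $R(\lambda,\mathcal{A})$: for instance the $(1,1)$-entry is $R_{\lambda}$, the $(2,1)$-entry is $e_{\lambda}R_{\lambda}$, and the $(3,1)$-entry is $R(\lambda,D_{s})b(\cdot)Le_{\lambda}R_{\lambda}=R(\lambda,D_{s})L_{\lambda}R_{\lambda}$ after factoring out $\hat{b}(\lambda)^{-1}$ in the notation convention (here $L_{\lambda}=\hat{b}(\lambda)Le_{\lambda}$). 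The other entries follow by the same bookkeeping, combining $\Sigma_{\lambda}=K+\hat{b}(\lambda)L$ with the split $\delta_{0}R(\lambda,D_{s})(b(\cdot)L\,\cdot)=\hat{b}(\lambda)L\,\cdot$ whenever a $b(\cdot)L\psi$ term traverses $\delta_{0}R(\lambda,D_{s})$.

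The main obstacle is not conceptual but organizational: keeping track of which operator acts on which space (in particular ensuring the products $\Sigma_{\lambda}e_{\lambda}$, $e_{\lambda}\Sigma_{\lambda}$, $\delta_{0}R(\lambda,D_{s})b(\cdot)L$, and $R(\lambda,D_{s})b(\cdot)Le_{\lambda}$ are properly typed) and transcribing the nine blocks without algebraic slips. The only genuinely analytic input beyond linear algebra is the identity $\delta_{0}R(\lambda,D_{s})b(\cdot)=\hat{b}(\lambda)\,\mathrm{id}$, which requires $\Re\lambda>\beta$ to make the Laplace integral converge; this is why the hypothesis $\lambda\in\mathbb{C}_{0}\cap\mathbb{C}_{\beta}\cap\rho(A)$ appears.
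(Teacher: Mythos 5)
Your plan is sound and is, at its core, the same Gaussian elimination the paper performs, just packaged differently: you solve $(\lambda-\mathcal{A})W=F$ directly, eliminating $\psi=e_{\lambda}\chi+R(\lambda,D_{\theta})g$ and $h=R(\lambda,D_{s})\bigl(b(\cdot)L\psi+k\bigr)$ to reach $(\lambda-A-\Sigma_{\lambda}e_{\lambda})\chi=f+\Sigma_{\lambda}R(\lambda,D_{\theta})g+\delta_{0}R(\lambda,D_{s})k$, whereas the paper first computes $R(\lambda,\mathcal{A}_{0})$ and then inverts $I-R(\lambda,\mathcal{A}_{0})\mathcal{B}$, relying on the factorization $R(\lambda,\mathcal{A})=(I-R(\lambda,\mathcal{A}_{0})\mathcal{B})^{-1}R(\lambda,\mathcal{A}_{0})$. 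Your organization has a real advantage: it does not presuppose that factorization for the unbounded (only $\mathcal{A}_{0}$-bounded) perturbation $\mathcal{B}$, which the paper invokes without justification; in exchange you should spell out both directions of your claim that the resolvent problem is uniquely solvable iff $\lambda\in\rho(A+\Sigma_{\lambda}e_{\lambda})$ (injectivity and surjectivity transfer through your substitutions in both directions; easy, but worth a line). The analytic ingredients coincide: $\delta_{0}R(\lambda,D_{s})b(\cdot)L=\hat{b}(\lambda)L$ for $\Re\lambda>\beta$, and the identification of $R(\lambda,A)\bigl(I-\Sigma_{\lambda}e_{\lambda}R(\lambda,A)\bigr)^{-1}$ with $R(\lambda,A+\Sigma_{\lambda}e_{\lambda})$; for the swap between $R(\lambda,A)\Sigma_{\lambda}e_{\lambda}$ and $e_{\lambda}R(\lambda,A)\Sigma_{\lambda}$ you invoke $\sigma(BC)\setminus\{0\}=\sigma(CB)\setminus\{0\}$ explicitly, while in the paper the swapped operator appears automatically from where the Schur complement sits in its $3\times 3$ system --- both are legitimate.

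One caveat on the bookkeeping: your computed $(3,1)$ block $R(\lambda,D_{s})\,b(\cdot)Le_{\lambda}R_{\lambda}$ is what the elimination genuinely gives (and what multiplying the paper's two matrices yields), but the reconciliation you offer with the displayed matrix, namely that it equals $R(\lambda,D_{s})L_{\lambda}R_{\lambda}$ ``after factoring out $\hat{b}(\lambda)^{-1}$'', is not an identity: $b(\cdot)Le_{\lambda}R_{\lambda}$ takes values in $L^{p}(\mathbb{R}_{+},X)$ while $L_{\lambda}R_{\lambda}=\hat{b}(\lambda)Le_{\lambda}R_{\lambda}$ takes values in $X$. The mismatch lies in the theorem's printed third row (where $b(\cdot)Le_{\lambda}$ appears as $L_{\lambda}$ and a trailing $R(\lambda,D_{s})$ has crept into the $(3,1)$ entry), not in your computation; state your blocks as computed rather than forcing them into that notation.
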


\begin{proof}
	Let $\lambda\in\mathbb{C}_{0}\cap\rho(A)$, we have  
	\begin{align*}
	R\left(\lambda,\mathcal{A}_{0}\right) & = \int_{0}^{+\infty}e^{-\lambda t}\mathcal{T}_{0}\left(t\right)dt\\
	& =  \begin{pmatrix}R\left(\lambda,A\right) & 0 & 0\\
	e_{\lambda}R\left(\lambda,\mathcal{A}\right) & R\left(\lambda,D_{\theta}\right) & 0\\
	0 & 0 & R\left(\lambda,D_{s}\right)
	\end{pmatrix}.
	\end{align*}
	Then $\lambda\in\rho\left(\mathcal{A}\right)$ if and only if the
	bounded operator $I-R\left(\lambda,\mathcal{A}_{0}\right)\mathcal{B}$
	is invertible, and we have
	\begin{equation}
	R\left(\lambda,\mathcal{A}\right)=\left(I-R\left(\lambda,\mathcal{A}_{0}\right)\mathcal{B}\right)^{-1}R\left(\lambda,\mathcal{A}_{0}\right).\label{eq:MainResolv}
	\end{equation}
	It is interesting to know  the conditions for which the bounded operator
	$I-R\left(\lambda,\mathcal{A}_{0}\right)\mathcal{B}$ is invertible, then  we calculate
	
	\begin{equation*}
	I-R\left(\lambda,\mathcal{A}_{0}\right)\mathcal{B}  = 	\left(\begin{array}{c:c:c}
	I & -R\left(\lambda,A\right)K & -R\left(\lambda,A\right)\delta_{0}\\ \hdashline
	0 & \left(I-e_{\lambda}R\left(\lambda,A\right)K\right) & -e_{\lambda}R\left(\lambda,A\right)\delta_{0}\\ \hdashline
	0 & -R\left(\lambda,D_{s}\right)b\left(\cdot\right)L & I
	\end{array}\right)
	\end{equation*}
	and suppose that  $I-R\left(\lambda,\mathcal{A}_{0}\right)\mathcal{B}$ is invertible, we have
	
	$$ 	\left(I-R\left(\lambda,\mathcal{A}_{0}\right)\mathcal{B}\right)^{-1}\begin{pmatrix}f\\
	g\\
	h
	\end{pmatrix}=\begin{pmatrix}x\\
	y\\
	z
	\end{pmatrix}  \iff  \left(I-R\left(\lambda,\mathcal{A}_{0}\right)\mathcal{B}\right)\begin{pmatrix}x\\
	y\\
	z
	\end{pmatrix}=\begin{pmatrix}f\\
	g\\
	h
	\end{pmatrix} $$
	\begin{align*}
		\iff &  \left(\begin{array}{c:c:c}	 I & -R\left(\lambda,A\right)K & -R\left(\lambda,A\right)\delta_{0}\\ \hdashline
			0 & \left(I-e_{\lambda}R\left(\lambda,A\right)K\right) & -e_{\lambda}R\left(\lambda,A\right)\delta_{0}\\ \hdashline
			0 & -R\left(\lambda,D_{s}\right)b\left(\cdot\right)L & I
		\end{array}\right)\begin{pmatrix}x\\
			y\\
			z
		\end{pmatrix}=\begin{pmatrix}f\\
			g\\
			h
		\end{pmatrix}\\
		 \iff &   \begin{cases}
			f=  x-R\left(\lambda,A\right)Ky-R\left(\lambda,A\right)\delta_{0}z\\
			g= \left(I-e_{\lambda}R\left(\lambda,A\right)K\right)y-e_{\lambda}R\left(\lambda,A\right)\delta_{0}z\\
			h=  -R\left(\lambda,D_{s}\right)b\left(\cdot\right)Ly+z
		\end{cases}
	\end{align*}
	Remark that $\delta_{0}R\left(\lambda,D_{s}\right)b\left(\cdot\right)L=\hat{b}\left(\lambda\right)L$
	for $ \lambda \in \mathbb{C}_{\beta} $ and from the second and third line of the above system we have 
	$$
	\left(I-e_{\lambda}R\left(\lambda,A\right)\left(K+\hat{b}\left(\lambda\right)L\right)\right)y=g+e_{\lambda}R\left(\lambda,A\right)\delta_{0}h,
	$$
	then   
	$I-R\left(\lambda,\mathcal{A}_{0}\right)\mathcal{B}$ is invertiblew if and only if $1\in\rho\left(e_{\lambda}R\left(\lambda,A\right)\left(K+\hat{b}\left(\lambda\right)L\right)\right)$
	and by a simple computation we have
	$$
	R\left(\lambda,A\right)\left(I-\left(K+\hat{b}\left(\lambda\right)L\right)e_{\lambda}R\left(\lambda,A\right)\right)^{-1}=R\left(\lambda,A+\left(K+\hat{b}\left(\lambda\right)L\right)e_{\lambda}\right).
	$$
	Then 
	$$
	\lambda\in\rho\left(\mathcal{A}\right)\text{ \ensuremath{\iff} }1\in\rho\left(e_{\lambda}R\left(\lambda,A\right)\left(K+\hat{b}\left(\lambda\right)L\right)\right)\text{ \ensuremath{\iff} }\lambda\in\rho\left(A+\left(K+\hat{b}\left(\lambda\right)L\right)e_{\lambda}\right).
	$$
	
	For simplicity of notation,  we use   $Q\left(\lambda\right)=\left(I-e_{\lambda}R\left(\lambda,A\right)\left(K+\hat{b}\left(\lambda\right)L\right)\right)^{-1}$
	and we have 
	$$
	\begin{cases}
	x =f+R\left(\lambda,A\right)Ky+R\left(\lambda,A\right)\delta_{0}z\\
	y =Q\left(\lambda\right)g+Q\left(\lambda\right)e_{\lambda}R\left(\lambda,A\right)\delta_{0}h\\
	z =R\left(\lambda,D_{s}\right)b\left(\cdot\right)Ly+h,
	\end{cases}
	$$
	then 
	\[
	\begin{cases}
	x= & f+R\left(\lambda,A\right)\left(K+\hat{b}\left(\lambda\right)L\right)Q\left(\lambda\right)g+R\left(\lambda,A\right)\delta_{0}h\\
	 & +R\left(\lambda,A\right)\left(K+\hat{b}\left(\lambda\right)L\right)Q\left(\lambda\right)e_{\lambda}R\left(\lambda,A\right)\delta_{0}h\\
	y= & Q\left(\lambda\right)g+Q\left(\lambda\right)e_{\lambda}R\left(\lambda,A\right)\delta_{0}h\\
	z= & R\left(\lambda,D_{s}\right)b\left(\cdot\right)LQ\left(\lambda\right)g+\left(R\left(\lambda,D_{s}\right)b\left(\cdot\right)LQ\left(\lambda\right)e_{\lambda}R\left(\lambda,A\right)\delta_{0}+I\right)h.
	\end{cases}
	\]
%
	Thus led to $$ \left(I-R\left(\lambda,\mathcal{A}_{0}\right)\mathcal{B}\right)^{-1}= $$
	$$
	\left(\begin{array}{c:c:c}	I & R\left(\lambda,A\right)\left(K+\hat{b}\left(\lambda\right)L\right)Q\left(\lambda\right) & \left[R\left(\lambda,A\right)\left(K+\hat{b}\left(\lambda\right)L\right)Q\left(\lambda\right)e_{\lambda}+I\right]R\left(\lambda,A\right)\delta_{0}\\ \hdashline
	0 & Q\left(\lambda\right) & Q\left(\lambda\right)e_{\lambda}R\left(\lambda,A\right)\delta_{0}\\ \hdashline
	0 & R\left(\lambda,D_{s}\right)b\left(\cdot\right)LQ\left(\lambda\right) & \left(R\left(\lambda,D_{s}\right)b\left(\cdot\right)LQ\left(\lambda\right)e_{\lambda}R\left(\lambda,A\right)\delta_{0}+I\right)
	\end{array}\right),	$$
	
	and from (\ref{eq:MainResolv}) one has 
	$$
	R\left(\lambda,\mathcal{A}\right)=\begin{pmatrix}R_{11} & R_{12} & R_{13}\\
	R_{21} & R_{22} & R_{23}\\
	R_{31} & R_{32} & R_{33}
	\end{pmatrix}
	$$
	with 
	$$
	\begin{cases}
	R_{11}= & R\left(\lambda,A\right)+R\left(\lambda,A\right)\left(K+\hat{b}\left(\lambda\right)L\right)Q\left(\lambda\right)e_{\lambda}R\left(\lambda,\mathcal{A}\right)\\
	R_{12}= & R\left(\lambda,A\right)\left(K+\hat{b}\left(\lambda\right)L\right)Q\left(\lambda\right)R\left(\lambda,D_{\theta}\right)\\
	R_{13}= & \left[R\left(\lambda,A\right)\left(K+\hat{b}\left(\lambda\right)L\right)Q\left(\lambda\right)e_{\lambda}+I\right]R\left(\lambda,A\right)\delta_{0}R\left(\lambda,D_{s}\right)\\
	R_{21}= & Q\left(\lambda\right)e_{\lambda}R\left(\lambda,\mathcal{A}\right)\\
	R_{22}= & Q\left(\lambda\right)R\left(\lambda,D_{\theta}\right)\\
	R_{23}= & Q\left(\lambda\right)e_{\lambda}R\left(\lambda,A\right)\delta_{0}R\left(\lambda,D_{s}\right)\\
	R_{31}= & R\left(\lambda,D_{s}\right)b\left(\cdot\right)LQ\left(\lambda\right)e_{\lambda}R\left(\lambda,A\right)R\left(\lambda,D_{s}\right)\\
	R_{32}= & R\left(\lambda,D_{s}\right)b\left(\cdot\right)LQ\left(\lambda\right)R\left(\lambda,D_{\theta}\right)\\
	R_{33}= & \left(R\left(\lambda,D_{s}\right)b\left(\cdot\right)LQ\left(\lambda\right)e_{\lambda}R\left(\lambda,A\right)\delta_{0}+I\right)R\left(\lambda,D_{s}\right).
	\end{cases}
	$$
	A simple computation gives the desired resolvent operator components
	\begin{align*}
		R_{11}  = & R\left(\lambda,A\right)+R\left(\lambda,A\right)\left(K+\hat{b}\left(\lambda\right)L\right)\left(I-e_{\lambda}R\left(\lambda,A\right)\left(K+\hat{b}\left(\lambda\right)L\right)\right)^{-1}e_{\lambda}R\left(\lambda,\mathcal{A}\right)\\
		 = & R\left(\lambda,A\right)\left(I-\left(K+\hat{b}\left(\lambda\right)L\right)e_{\lambda}R\left(\lambda,A\right)\right)^{-1}\\
		 = & R\left(\lambda,A+\left(K+\hat{b}\left(\lambda\right)L\right)e_{\lambda}\right),\\
		R_{12}  = & R\left(\lambda,A\right)\left(K+\hat{b}\left(\lambda\right)L\right)\left(I-e_{\lambda}R\left(\lambda,A\right)\left(K+\hat{b}\left(\lambda\right)L\right)\right)^{-1}R\left(\lambda,D_{\theta}\right)\\
		 = & R\left(\lambda,A\right)\left(I-\left(K+\hat{b}\left(\lambda\right)L\right)e_{\lambda}R\left(\lambda,A\right)\right)^{-1}\left(K+\hat{b}\left(\lambda\right)L\right)R\left(\lambda,D_{\theta}\right)\\
		 = & R\left(\lambda,A+\left(K+\hat{b}\left(\lambda\right)L\right)e_{\lambda}\right)\left(K+\hat{b}\left(\lambda\right)L\right)R\left(\lambda,D_{\theta}\right),\\
		R_{13}  = & \left[R\left(\lambda,A\right)\left(K+\hat{b}\left(\lambda\right)L\right)\left(I-e_{\lambda}R\left(\lambda,A\right)\left(K+\hat{b}\left(\lambda\right)L\right)\right)^{-1}e_{\lambda}+I\right]R\left(\lambda,A\right)\delta_{0}R\left(\lambda,D_{s}\right)\\
		 = & \left[R\left(\lambda,A\right)\left(I-\left(K+\hat{b}\left(\lambda\right)L\right)e_{\lambda}R\left(\lambda,A\right)\right)^{-1}\left(K+\hat{b}\left(\lambda\right)L\right)e_{\lambda}+I\right]R\left(\lambda,A\right)\delta_{0}R\left(\lambda,D_{s}\right)\\
		 = & \left[R\left(\lambda,A+\left(K+\hat{b}\left(\lambda\right)L\right)e_{\lambda}\right)\left(K+\hat{b}\left(\lambda\right)L\right)e_{\lambda}+I\right]R\left(\lambda,A\right)\delta_{0}R\left(\lambda,D_{s}\right),\\
		R_{21}  = & \left(I-e_{\lambda}R\left(\lambda,A\right)\left(K+\hat{b}\left(\lambda\right)L\right)\right)^{-1}e_{\lambda}R\left(\lambda,\mathcal{A}\right)\\
		 = & e_{\lambda}R\left(\lambda,\mathcal{A}\right)\left(I-\left(K+\hat{b}\left(\lambda\right)L\right)e_{\lambda}R\left(\lambda,A\right)\right)^{-1}\\
		 = & e_{\lambda}R\left(\lambda,A+\left(K+\hat{b}\left(\lambda\right)L\right)e_{\lambda}\right),
	\end{align*}
	
	\begin{align*}
		R_{22}  = & \left(I-e_{\lambda}R\left(\lambda,A\right)\left(K+\hat{b}\left(\lambda\right)L\right)\right)^{-1}R\left(\lambda,D_{\theta}\right)\\
		 = & \left[I+e_{\lambda}R\left(\lambda,A\right)\left(I-\left(K+\hat{b}\left(\lambda\right)L\right)e_{\lambda}R\left(\lambda,A\right)\right)^{-1}\left(K+\hat{b}\left(\lambda\right)L\right)\right]R\left(\lambda,D_{\theta}\right)\\
		 = & \left[I+e_{\lambda}R\left(\lambda,A\right)\left(I-\left(K+\hat{b}\left(\lambda\right)L\right)e_{\lambda}R\left(\lambda,A\right)\right)^{-1}\left(K+\hat{b}\left(\lambda\right)L\right)\right]R\left(\lambda,D_{\theta}\right)\\
		 = & \left[I+e_{\lambda}R\left(\lambda,A+\left(K+\hat{b}\left(\lambda\right)L\right)e_{\lambda}\right)\left(K+\hat{b}\left(\lambda\right)L\right)\right]R\left(\lambda,D_{\theta}\right),\\
		R_{23}  = & R_{21}\delta_{0}R\left(\lambda,D_{s}\right)\\
		 = & e_{\lambda}R\left(\lambda,A+\left(K+\hat{b}\left(\lambda\right)L\right)e_{\lambda}\right)\delta_{0}R\left(\lambda,D_{s}\right),\\
		R_{31}  = & R\left(\lambda,D_{s}\right)b\left(\cdot\right)LR_{23}\\
		 = & R\left(\lambda,D_{s}\right)b\left(\cdot\right)Le_{\lambda}R\left(\lambda,A+\left(K+\hat{b}\left(\lambda\right)L\right)e_{\lambda}\right)R\left(\lambda,D_{s}\right),\\
		R_{32}  = & R\left(\lambda,D_{s}\right)b\left(\cdot\right)LR_{22}\\
		 = & R\left(\lambda,D_{s}\right)b\left(\cdot\right)L\left[I+e_{\lambda}R\left(\lambda,A+\left(K+\hat{b}\left(\lambda\right)L\right)e_{\lambda}\right)\left(K+\hat{b}\left(\lambda\right)L\right)\right]R\left(\lambda,D_{\theta}\right),
	\end{align*}
	and
	\begin{align*}
		R_{33}  = & \left(R\left(\lambda,D_{s}\right)b\left(\cdot\right)LR_{21}\delta_{0}+I\right)R\left(\lambda,D_{s}\right)\\
		 = & \left(R\left(\lambda,D_{s}\right)b\left(\cdot\right)Le_{\lambda}R\left(\lambda,A+\left(K+\hat{b}\left(\lambda\right)L\right)e_{\lambda}\right)\delta_{0}+I\right)R\left(\lambda,D_{s}\right)
	\end{align*}
	which complete the proof. 
\end{proof}

\begin{remark}
If $b=0$ we have 	
	\[
	R\left(\lambda,\mathcal{A}\right)=\left(\begin{array}{ccc}
	R_{11} & R_{12} & R_{13}\\
	 R_{21} & R_{22} & R_{23}\\
	 R_{31} & \begin{array}{c}
	R_{32}\end{array} & \begin{array}{c}
	R_{33}\end{array}
	\end{array}\right)
	\]
	with 
	\[
	\begin{cases}
	R_{11}=R\left(\lambda,A+K_{\lambda}\right)\\
	R_{12}=R\left(\lambda,A+K_{\lambda}\right)KR\left(\lambda,D_{\theta}\right)\\
	R_{13}=\begin{array}{c}
	\left[R\left(\lambda,A+K_{\lambda}\right)K_{\lambda}+I\right]R\left(\lambda,A\right)\delta_{0}R\left(\lambda,D_{s}\right)\end{array}\\
	R_{21}=e_{\lambda}R\left(\lambda,A+K_{\lambda}\right)\\
	R_{22}=\begin{array}{c}
	\left[e_{\lambda}R\left(\lambda,A+K_{\lambda}\right)K+I\right]R\left(\lambda,D_{\theta}\right)\end{array}\\
	R_{23}=e_{\lambda}R\left(\lambda,A+K_{\lambda}\right)K_{\lambda}\delta_{0}R\left(\lambda,D_{s}\right)\\
	R_{31}=\begin{array}{c}
	R_{32}\end{array}=0\\
	R_{33}=\begin{array}{c}
	R\left(\lambda,D_{s}\right)\end{array}
	\end{cases}
	\]

	As $b\left(\cdot\right)=0$,   the function $g$ defined in (\ref{eq:Fct_g})
	is equal to $0$, then if we replace the third space $L^{p}\left(\mathbb{R}_{+},X\right)$
	by $\left\{ 0\right\} $, we obtain the same result as in \cite[Proposition 3.19, page 56]{batkai2005semigroups}
	$$
	R\left(\lambda,\mathcal{A}\right)\begin{pmatrix}x\\
	\varphi\\
	0
	\end{pmatrix}=\left(\begin{array}{c:c}
	R\left(\lambda,A+K_{\lambda}\right) & R\left(\lambda,A+K_{\lambda}\right)KR\left(\lambda,D_{\theta}\right)\\
	\hdashline e_{\lambda}R\left(\lambda,A+K_{\lambda}\right) & \begin{array}{c}
	\left[e_{\lambda}R\left(\lambda,A+K_{\lambda}\right)K+I\right]R\left(\lambda,D_{\theta}\right)\end{array}
	\end{array}\right)\begin{pmatrix}x\\
	\varphi
	\end{pmatrix}\times\left\{ 0\right\} .
	$$
\end{remark}

\section{ Application.}  
In order to apply the results obtained in Sections 2 and 3, we consider the two following examples.
\begin{example}
	Consider the following diffusion equation. Let $t\geq0$ and $ x\in\mathbb{R} $
	\begin{equation}
	\begin{cases}
	\frac{\partial}{\partial t}z(x,t)=\Delta z(x,t)+\int_{0}^{t}e^{-\left(t-s\right)}\int_{-1}^{0}d\mu\left(\theta\right)z(x,s+\theta)d\theta ds+z(x,t-1); & \\
	z(x,0)=z^{0}(x)\\
	z(x,\theta)=\varphi(x,\theta); \quad \theta \in[-1,0].&
	\end{cases}\label{eq:Examp1}
	\end{equation}
	In order to write the system (\ref{eq:Examp1}) as the abstract form
	of system \eqref{RIE}, we take
	\begin{itemize}
		\item the state space $X=L^{2}(\mathbb{R})$,
		\item the operator $A=\Delta$, with $D(A)=W^{2,2}\left(\mathbb{R}\right)$,
		\item the function $\mathbb{R}_{+}\ni t\mapsto z(t)=z(\cdot,t)\in L^{2}(\mathbb{R})$,
		and the history function $z_{t}:[-1,0]\rightarrow L^{2}(\mathbb{R})$; $z_{t}(s)=z(t+s)$,
		\item the state delay operator $L:W^{1,2}\left([-1,0],L^{2}(\mathbb{R})\right)\rightarrow L^{2}(\mathbb{R})$,
		defined by $L\varphi=\int_{-1}^{0}d\mu\left(\theta\right)\varphi(\theta)$,
		with $\mu:[-1,0]\rightarrow X$ be of bounded variation.
		\item the state delay operator $K:W^{1,2}\left([-1,0],L^{2}(\mathbb{R})\right)\rightarrow L^{2}(\mathbb{R})$,
		defined by $K=\delta_{-1}$. 
	\end{itemize}
	It is well known that $A$ generates an analytic $C_{0}$-semigroup
on $X$, denoted by $\mathbb{T}(t)$ called the diffusion semigroup
(see \cite[Chapter II, page 69]{EngelNagel;2000}. Moreover, $\mathbb{T}(t)$
is expressed as follows 
\[
(\mathbb{T}(t)f)(s):=\begin{cases}
(4\pi t)^{-1/2}\int_{\mathbb{R}}\mathrm{e}^{-(s-\theta)^{2}/4t}f(\theta)\mathrm{d}\theta & t>0\\
f(s) & t=0
\end{cases};\,\text{ where }s\in\mathbb{R}.
\]
From \eqref{cond.miyadera} we have already mentioned that  condition $\left(M_{1}\right)$ is satisfied by $L$.
It is well-known  see \cite[Theorem 6.13, page 74]{A.Pazy;1983} that $\left(-\Delta\right)^{\gamma}$ satisfied  the following estimation $
\left\Vert t^{\gamma}\left(-\Delta\right)^{\gamma}\mathbb{T}(t)\right\Vert \leq M, 
$ with $ M $ and $ \gamma $ are positive constants. 
By a simple computation with $ \gamma\in (0,\frac{1}{2}) $  we have :\\

$\displaystyle\int_{0}^{\alpha}\left\Vert \left(-\Delta\right)^{\gamma}\mathbb{T}(t)x\right\Vert ^{2}dt  \leq M^{2}\frac{1}{1-2\gamma}\alpha^{1-2\gamma}\left\Vert x\right\Vert ^{2}$, which implies that $\mathcal{F}$ is a $ 2 $-admissible observation operator for $ (\mathbb{T}(t)) $. Then  the well-posedness of the system (\ref{eq:Examp1}) is reached by Theorem \ref{thm:Wellposedness}.
\end{example}

\begin{example}
	Consider the following retarded heat equation with the Dirichlet boundary condition  on  the state space $X=L^{2}[0,\pi]$, for $ x\in[0,\pi] $ and $ t\geq 0 $
	\[
	\begin{cases}
	\frac{\partial}{\partial t}z(x,t)=\Delta z(x,t)+\int_{0}^{t}e^{-\left(t-s\right)}z(x,s-1)ds+z(x,t-1);\\
	z(0,t)=z(\pi,t)=0;\\
	z(x,0)=z^{0}(x);\\
	z(x,s)=\varphi(x,s);\quad s\in[-1,0].
	\end{cases}
	\]
	
	Here we take
	\begin{itemize}
		\item the operator $A=\Delta$, with $D(A)= \left\lbrace f\in H^2[0,\pi] \, : \, f(0)=f(\pi)=0 \right\rbrace $,
	 
		\item the state delay operators $L,K:H^{1}\left([-1,0],X\right)\rightarrow X$,
		defined by $L=K=\delta_{-1}$, 
		\item the scalar kernel is defined by  $b\left(t\right)=e^{- t}$.
	\end{itemize}
	The well-posedness of this system is due to the fact that $ A $ generates a  $ C_0 $-semigroup and from \eqref{eq:myExmple}, we have already mentioned that condition $\left(M_{2}\right)$ is satisfied by $L$, these conditions verify Theorem \ref{thm:Wellposedness}. \\
	Moreover, $ A $ generates a  compact $ C_0 $-semigroup $ \mathbb T (t) $ on $ X $ (see, e.g., \cite[Example 1.1]{G.Weiss;1989b}, \cite[Example 2.6.8]{TucsnakWeiss2009} and \cite[Example 2.1.1]{Curtain.Zwart;1995}), the compactness of $ \mathbb T (t) $ guarantees that
	$ \sigma (A) =\sigma_p (A)  $. 
	In the following orthonormal basis $\phi_{n}(x)=\sqrt{\frac{2}{\pi}}\cos(nx)$
	for $n\geq1$,  we have 
	$ \sigma _p(A) = \left\lbrace -n^2   :  n = 1, 2, \cdots \right\rbrace$. Then from Theorem \ref{thm:Spectr}, one has for   
	$\lambda\notin\mathbb{C}_{0}\cap\rho(A)$ :
	\begin{alignat*}{1}
	\lambda\in\sigma\left(\mathcal{A}\right) & \iff\lambda\in\sigma\left(A+\delta_{-1}e_{\lambda}+\hat{b}\left(\lambda\right)\delta_{-1}e_{\lambda}\right)\\
	& \iff\left(\lambda-\delta_{-1}e_{\lambda}-\hat{b}\left(\lambda\right)\delta_{-1}e_{\lambda}\right)\in\sigma\left(A\right)
	\end{alignat*}
	
	then 
	\begin{align*}\sigma\left(\mathcal{A}\right) & =\left\{ \lambda\notin\mathbb{C}_{0}\,\mid\,\lambda-\frac{2+\lambda}{1+\lambda}e^{-\lambda}=-n^2\right\} \\
	& =\left\{ \lambda\notin\mathbb{C}_{0}\,\mid\,\left(\lambda+n^2\right)\left(\lambda +1\right)=(2+\lambda)e^{-\lambda}\right\} .
	\end{align*}
	
\end{example}


\medskip

\medskip


\end{document}